\documentclass[a4paper,10pt]{amsart}

\tolerance=1000
\hbadness=8000
\hfuzz=15.00pt
\vbadness=10000
\vfuzz=5.00pt

\usepackage{amsfonts, latexsym, amsmath, amssymb, amsthm, amscd}
\usepackage[all]{xy}
\usepackage[english]{babel}
\usepackage[utf8]{inputenc}
\usepackage{graphicx}
\usepackage{booktabs}
\usepackage{array, tabularx}
\usepackage{textcomp}
\usepackage{tikz}
\usepackage{multirow}
\usepackage{paralist}
\usepackage{comment}
\usepackage{url}
\usepackage{tensor}
\usepackage{MnSymbol}
\usepackage[hypertexnames=false,
backref=page,
    pdftex,
    pdfpagemode=UseNone,
    breaklinks=true,
    extension=pdf,
    colorlinks=true,
    linkcolor=blue,
    citecolor=blue,
    urlcolor=blue,
]{hyperref}

\usepackage[top=1.5in, bottom=.9in, left=0.9in, right=0.9in]{geometry}

\newcommand{\referenza}{}

\newtheorem{thm}{Theorem}[section]
\newtheorem*{thm*}{Theorem \referenza}
\newtheorem{cor}[thm]{Corollary}
\newtheorem*{cor*}{Corollary \referenza}
\newtheorem{lem}[thm]{Lemma}
\newtheorem*{lem*}{Lemma \referenza}

\newtheorem*{prop*}{Proposition \referenza}

\newtheorem*{conj*}{Conjecture \referenza}
\theoremstyle{definition}
\newtheorem{rmk}[thm]{Remark}

\theoremstyle{definition}

\newtheorem{defi}[thm]{Definition}

\DeclareFontFamily{U}{MnSymbolC}{}
\DeclareSymbolFont{MnSyC}{U}{MnSymbolC}{m}{n}
\DeclareFontShape{U}{MnSymbolC}{m}{n}{
    <-6>  MnSymbolC5
   <6-7>  MnSymbolC6
   <7-8>  MnSymbolC7
   <8-9>  MnSymbolC8
   <9-10> MnSymbolC9
  <10-12> MnSymbolC10
  <12->   MnSymbolC12}{}
\DeclareMathSymbol{\intprod}{\mathbin}{MnSyC}{'270}

\DeclareMathOperator{\Ker}{Ker}

\DeclareMathOperator{\im}{Im}
\DeclareMathSymbol{\Finv} {\mathord}{AMSb}{"60}

\newcommand{\Q}{\mathbb{Q}}

\newcommand{\R}{\mathbb{R}}
\newcommand{\C}{\mathbb{C}}

\newcommand{\del}{\partial}
\newcommand{\delbar}{\overline{\partial}}
\newcommand{\N}{\mathbb{N}}
\numberwithin{equation}{section}

\newcommand{\End}{End}
\newcommand{\A}{\mathcal{A}}
\let\c\overline
\let\phi\varphi
\newcommand{\de}[2]{\frac{\partial #1}{\partial #2}}

\allowdisplaybreaks

\title[Deformations of Strong K\"ahler with torsion metrics]{Deformations of Strong K\"ahler with torsion metrics}

\author{Riccardo Piovani}
\address[Riccardo Piovani]{
Dipartimento di Matematica\\
Universit\`a di Pisa}
\email{riccardo.piovani@phd.unipi.it}

\author{Tommaso Sferruzza}
\address[Tommaso Sferruzza]{
Dipartimento di Scienze Matematiche, Fisiche e Informatiche\\
Università di Parma}
\email{tommaso.sferruzza@unipr.it}

\keywords{SKT metrics, deformations of complex structures}
\thanks{The first author is partially supported by GNSAGA of INdAM}
\subjclass[2020]{32G05, 53B35, 53C55}

\date{\today}

\begin{document}

\begin{abstract}
Existence of strong K\"ahler with torsion metrics, shortly SKT metrics, on complex manifolds has been shown to be unstable under small deformations. 
We find necessary conditions under which the property of being SKT is stable for a smooth curve of Hermitian metrics $\{\omega_t\}_t$ which equals a fixed SKT metric $\omega$ for $t=0$, along a differentiable family of complex manifolds $\{M_t\}_t$.
\end{abstract}

\maketitle

\section{Introduction}
Let $(M,J,g,\omega)$ be an Hermitian manifold. If the fundamental form $\omega$ of $g$ is closed,  i.e., $d\omega=0$, where $\omega(\cdot,\cdot)=g(J\cdot,\cdot)$, the metric $g$ is said to be K\"ahler. 
By the celebrated theorem of Kodaira and Spencer, see \cite{KS60}, we know that on a compact complex manifold the K\"ahler condition,  i.e., the property of admitting a Hermitian metric with closed fundamental form, is stable under small deformations of the complex structure. Therefore, it is straightforward to consider notions that generalize the K\"ahler condition which naturally arise in the Hermitian setting and study their stability under deformations.

When the fundamental form $\omega$ (or its powers) belong to kernel of certain differential operators deriving from the complex structure, special Hermitian structures arise, e.g., SKT and balanced metrics. More precisely, denoting $(M,J)$ a complex manifold of complex dimension $n$, a Hermitian metric $g$ on $(M,J)$ with fundamental associated form $\omega$ is said to be \emph{strong K\"ahler with torsion}, shortly SKT, or \emph{pluriclosed} if $\del\delbar\omega=0$. Note that if $g$ is K\"ahler, then it is also trivially SKT.
Another notion which generalizes K\"ahlerness is the balanced condition,  i.e., $d\omega^{n-1}=0$.
Also in this case, if $g$ is K\"ahler, then it is balanced. In respectively \cite{FT09} and \cite{AB90}, it is proved that the existence of SKT and balanced metrics is not stable, once the base compact complex manifold is deformed via a smooth family of complex structures. In both works the authors construct explicit examples of differentiable families of complex manifolds which do not admit respectively SKT and balanced metrics.

Since the existence of SKT metrics on complex manifolds is not stable under deformations, it is worth investigating under which assumptions a SKT metric exists on a deformed complex manifold. 
More in detail, we will be interested in studying SKT metrics which are not K\"ahler, taking into account the stability result of the K\"ahler condition by Kodaira and Spencer. Analogously to the K\"ahler setting as studied in \cite{HL}, the existence of SKT metrics on compact complex manifolds can be intrinsically characterized in terms of currents, see \cite{E}.
SKT metrics play a relevant role in the following setting. Let $(M,J,g,\omega)$ be a Hermitian manifold of complex dimension $n$. It is known that there exists a unique connection $\nabla^B$, called \emph{Bismut connection}, satisfying $\nabla^Bg=0$, $\nabla^BJ=0$ for which $g(X,T(Y,Z))$ is totally skew-symmetric, where $T$ denotes the torsion of $\nabla^B$. The resulting $3$-form turns out to be equal to $Jd\omega$. The properties of such connection are related to what is called \emph{K\"ahler with torsion geometry} (we refer to \cite{FG}, \cite{Ga97}, \cite{Stro} for further details), and if $Jd\omega$ is closed, or equivalently if $\del\delbar\omega=0$, then the Hermitian structure is strong K\"ahler with torsion and $g$ is indeed called SKT. We point out that compact complex manifolds admitting SKT structures have been proven to be valid candidates for the study of generalizations of the K\"ahler-Ricci flow, see for example \cite{ST09}. See also \cite{Cav} for a development of Hodge theory on SKT manifolds by tools from generalized complex geometry.

The theory regarding compact complex manifolds admitting SKT metrics in complex dimension $n$ at least three is completely different from the one on compact complex surfaces. Indeed, on a compact complex surface a Hermitian metric is SKT if and only if it is \emph{Gauduchon}, i.e., $\del\delbar\omega^{n-1}=0$, and it is well known, by a remarkable result of Gauduchon in \cite{Ga}, that there exists a Gauduchon metric in the conformal class of any given Hermitian metric on a compact complex manifold.
Therefore on a compact complex surface the SKT condition is stable under small deformations
of the complex structure.

Examples of compact complex manifolds admitting SKT metrics of complex dimension at least three are given by nilmanifolds, i.e., compact quotients of connected simply-connected nilpotent Lie groups by uniform discrete subgroups. In particular, for complex dimension three, or real dimension six, nilmanifolds with SKT metrics have been characterized. In \cite{FPS04}, Fino, Parton and Salamon prove that if $M$ is a $6$-dimensional nilmanifold with an invariant complex structure $J$, then the SKT condition is satisfied by either all invariant Hermitian metrics or by none; moreover, it is satisfied if and only if the complex structure $J$ fulfils a suitable property.
Therefore, it is worth studying what happens in higher dimensions. 

In \cite{RT12}, Rossi and Tomassini prove that if $M$ is a $8$-dimensional nilmanifold with an invariant complex structure $J$, then the SKT condition is satisfied by all invariant Hermitian metrics if and only if the complex structure $J$ fulfils a suitable property. Therefore, it can happen that a $8$-dimensional nilmanifold with an invariant complex structure $J$ admits both invariant SKT metrics and invariant non-SKT metrics.

In this paper, we prove the following necessary condition to the existence of a smooth family of SKT metrics on a differentiable family of complex manifolds.
\begin{thm}\label{thm:main}
Let $(M,J,g,\omega)$ be a compact Hermitian manifold with $g$ a SKT metric. Let $\{M_t\}_{t\in I}$ be a differentiable family of compact complex manifolds parametrized by the $(0,1)$-vector form $\phi(t)$, for $t\in I=(-\epsilon,\epsilon)$, $\epsilon>0$. Let $\{\omega_t\}_{t\in I}$ be a smooth family of Hermitian metrics on each $M_t$ written as
\begin{equation*}
\omega_t=e^{i_{\phi(t)}|i_{\overline{\phi(t)}}}\,\,(\omega(t)),
\end{equation*}
where $\omega(t)$ has local expression $\omega_{ij}(t)\, dz^i\wedge d\overline{z}^j\in\A^{1,1}(M)$. Denote by $\omega^\prime(t):=\de{}{t}\omega_{ij}(t)\, dz^i\wedge d\overline{z}^j\in\A^{1,1}(M)$. Then, if the metrics $\omega_t$ are SKT for every $t\in I$, the following condition must hold
\begin{equation}\label{eq:main}
2i\mathfrak{Im}(\del\circ i_{\phi'(0)}\circ \del)(\omega)=\del\delbar\omega'(0).
\end{equation}
\end{thm}
Here, for any $p,q$ and for $t\in(-\epsilon,\epsilon)$, the map $e^{i_{\phi(t)}|i_{\overline{\phi(t)}}}\colon\A^{p,q}(M)\rightarrow\A^{p,q}(M_t)$ is a real linear isomorphism between the space of $(p,q)$-forms on $M$ and the space of $(p,q)$-forms on $M_t$, called \emph{extension map}; see equation (\ref{def-exp}) for its definition. By $i_\psi$ we denote the contraction operator on $(p,q)$-forms by the $(0,1)$-vector form $\psi$; see section \ref{preliminaries} for its definition.
As a consequence, we have the following cohomological obstruction.
\begin{cor}\label{cor:main}
Let $(M,J,g,\omega)$ be a compact Hermitian manifold. A necessary condition for the existence of a smooth family of SKT metrics which equals $\omega$ in $t=0$ along the family of deformations $t\mapsto\phi(t)$ is that the following equation must hold
\[
\left[\mathfrak{Im}(\del\circ i_{\phi'(0)}\circ \del)(\omega)\right]_{H_{BC}^{2,2}(M)}=0.
\]
\end{cor}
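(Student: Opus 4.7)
The plan is to deduce the corollary as an essentially immediate packaging of Theorem \ref{thm:main} into Bott-Chern cohomology. Granting the identity
\[
2i\mathfrak{Im}(\del\circ i_{\phi'(0)}\circ \del)(\omega)=\del\delbar\omega'(0)
\]
from the theorem, all that remains is to observe that the right-hand side lives in $\del\delbar(\A^{1,1}(M))$ by construction, so the left-hand side does too; hence the corresponding Bott-Chern class vanishes.

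More precisely, I would proceed as follows. First, I check that both sides indeed live in $\A^{2,2}(M)$ and represent a Bott-Chern class. Since $\phi'(0)\in\A^{0,1}(T^{1,0}M)$, the contraction $i_{\phi'(0)}$ shifts bidegree by $(-1,+1)$, so $\del\circ i_{\phi'(0)}\circ\del$ sends $\omega\in\A^{1,1}(M)$ into $\A^{2,2}(M)$; its conjugate $\delbar\circ i_{\overline{\phi'(0)}}\circ\delbar$ also maps into $\A^{2,2}(M)$, and the two combine to form a real $(2,2)$-form. Because $\omega'(0)\in\A^{1,1}(M)$, the form $\del\delbar\omega'(0)$ is tautologically $\del$-closed and $\delbar$-closed, and the equation above transports both properties to the left-hand side, so $\mathfrak{Im}(\del\circ i_{\phi'(0)}\circ\del)(\omega)$ defines an element of $H^{2,2}_{BC}(M)$.

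Next, the same equation exhibits a representative of this class as $\del\delbar\bigl(\tfrac{1}{2i}\omega'(0)\bigr)$, i.e.\ as an element of the image of $\del\delbar\colon \A^{1,1}(M)\to\A^{2,2}(M)$. By the very definition of the Bott-Chern cohomology group recalled in Section \ref{preliminaries}, this means
\[
\bigl[\mathfrak{Im}(\del\circ i_{\phi'(0)}\circ \del)(\omega)\bigr]_{H_{BC}^{2,2}(M)}=0,
\]
which is the stated obstruction.

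There is no real obstacle here: all the analytic work sits in Theorem \ref{thm:main}, and the corollary is just the statement that the $\del\delbar$-exactness encoded in that theorem translates, via the definition of $H^{2,2}_{BC}(M)$, into the vanishing of the corresponding Bott-Chern class. The only thing to watch is the bookkeeping of the factor $2i$ and the verification that the left-hand side is genuinely $d$-closed so that the Bott-Chern class is well-defined in the first place, both of which are immediate from the identity.
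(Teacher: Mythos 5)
Your proposal is correct and matches the paper's (implicit) argument: the paper states the corollary as an immediate consequence of Theorem \ref{thm:main}, precisely because the identity exhibits $\mathfrak{Im}(\del\circ i_{\phi'(0)}\circ\del)(\omega)$ as $\del\delbar\bigl(\tfrac{1}{2i}\omega'(0)\bigr)$, hence $\del$- and $\delbar$-closed and representing the zero class in $H_{BC}^{2,2}(M)$. Your bookkeeping of bidegrees and of the factor $2i$ is accurate.
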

Here, $H_{BC}^{p,q}(M)$ denotes the Bott-Chern cohomology group of bi-degree $(p,q)$ defined on the complex manifold $M$.

We remark that our results involves a slightly different notion of stability of SKT metrics from the usual one. Our results concern the existence of smooth families of SKT metrics $\{\omega_t\}_t$ on the differentiable family of complex manifold $\{M_t\}_t$,
and do not concern the existence of SKT metrics on $\{M_t\}_t$ in full generality.

To prove our result, following Rao and Zhao in \cite{RZ}, we develop a method to compute the complex differentials $\del_t$ and $\delbar_t$ acting on $(p,q)$-forms on a differentiable family of complex manifolds $\{M_t\}_t$, which depends on the complex differentials $\del_0=\del$ and $\delbar_0=\delbar$ on the base complex manifold $M_0=M$, and on the $(0,1)$-differential form with values in the holomorphic tangent bundle which describes the deformation of the complex structure.  Note that it is not necessary to have any information on the complex coordinates of the deformed complex manifold to apply this method of computing $\del_t$ and $\delbar_t$.

We remark that the method just introduced of computing $\del_t$ and $\delbar_t$ acting on $(p,q)$-forms could be applied to find necessary conditions to the existence of differentiable families of deformations with smooth families of special Hermitian metrics other than the SKT ones, for example balanced metrics.

The paper is organized in the following way. In section \ref{preliminaries}, we recall the basic notions and definitions which will be useful later on. In section \ref{deformations}, we give a brief review of the classical deformation theory, following \cite{MK}, and introduce the extension map mentioned above. In section \ref{main}, we recall the expressions of the complex differentials $\del_t$ and $\delbar_t$ acting on $(p,q)$-forms on a differentiable family of complex manifolds $\{M_t\}_t$, as developed by Rao and Zhao in \cite{RZ}, and prove our main result. In section \ref{applications}, we apply Theorem \ref{thm:main} and Corollary \ref{cor:main} providing two examples of 8-dimensional nilmanifolds admitting a left invariant complex structure, more precisely on a family of nilmanifolds introduced in \cite[Section 2.3]{FT11} and  on a quotient of the product of two copies of the real Heisenberg group $\mathbb{H}(3;\R)$ and $\R^2$, presented in \cite[Example 8]{RT12}.

We remark that the case of 8-dimensional nilmanifolds admitting a left invariant complex structure is of particular interest, since, as noted above, existence results for SKT metrics in dimension eight are not as known as in dimension six.

\medskip\medskip
\noindent{\em Acknowledgments.} The authors would like to sincerely thank Adriano Tomassini, both for his support and encouragement, and for many useful discussions and suggestions.
We are also grateful to Daniele Angella, Anna Fino, and Federico Rossi for interesting conversations and helpful comments.

\section{Notations and preliminaries}\label{preliminaries}
Let $(M, J, g,\omega)$ be an Hermitian manifold, with $J\in\End(TM)$ the integrable almost-complex structure on $M$ and $g$ a Riemannian metric on $M$ compatible with $J$. Let $\omega$ be the $(1,1)-$fundamental form associated to $g$ given by $\omega(\cdot,\cdot)=g(J\cdot,\cdot)$.

The metric $g$ is said to be \emph{strong K\"ahler with torsion}, briefly SKT, if
\begin{equation*}
\del\delbar\omega=0,
\end{equation*}
where $d=\del+\delbar$ is the decomposition induced by the complex structure.

Let $\pi\colon E\rightarrow M$ be a complex vector bundle of rank $r$ over $(M,J,g,\omega)$, a Hermitian manifold of complex dimension $n$. For every $p,q$, let $\bigwedge^{p,q}(M,E):=\bigwedge^{p,q}(M)\otimes E$ be the bundle of the $(p,q)$-differential forms on $M$ with values in $E$ and let $\mathcal{A}^{p,q}(M,E):=\Gamma(M,\bigwedge^{p,q}(M,E))$ be the space of its global $\mathcal{C}^{\infty}$-sections.

If $h$ is an Hermitian metric $h$ on $E$,  i.e., a smooth Hermitian scalar product on each fibre of $E$, let us identify $h$ as a $\C$-antilinear isomorphism between $E$ and its dual $E^*$ and consider the usual $\C$-antilinear Hodge $\ast$-operator on $(M,J,g,\omega)$ with respect to $g$ (see \cite{Huy04}). Then
\begin{gather*}
{\ast}_E\colon\textstyle\mathcal{A}^{p,q}(M,E)\rightarrow\mathcal{A}^{n-p,n-q}(M,E^{\ast}),\\
{\ast}_E(\varphi\otimes s):=\ast({\varphi})\otimes h(s), \quad\text{for}\,\, \varphi\otimes s\in\textstyle\mathcal{A}^{p,q}(M,E),\nonumber
\end{gather*}
is a $\C$-antilinear isomorphism depending on the metrics $g$ and $h$, such that $\ast_{E^*}\circ{\ast}_E=(-1)^{p+q}$ on $\bigwedge^{p,q}(M)\otimes E$. In particular, $h(\alpha,\beta)\ast 1=\alpha\wedge{\ast}_E(\beta)$, for $\alpha,\beta\in\bigwedge^{p,q}(M,E)$.

An element of $\mathcal{A}^{p,q}(M,E)$ can be locally written as $\beta=\sum \beta_i\otimes s_i$, with $\beta_i\in\mathcal{A}^{p,q}(M)$ and $(s_1\dots,s_r)$ a local trivialization of $E$. Then we can define
\begin{equation}\label{eq:dbarE}
\delbar_E(\beta):=\sum\delbar(\beta_i)\otimes s_i,
\end{equation}
and the Dolbeault cohomology of a holomorphic vector bundle as
\begin{equation*}
H^{p,q}_{\delbar_E}(M,E):=\displaystyle\frac{\Ker(\delbar_E\colon\mathcal{A}^{p,q}(M,E)\rightarrow\mathcal{A}^{p,q+1}(M,E))}{\im(\delbar_E\colon\mathcal{A}^{p,q-1}(M,E)\rightarrow\mathcal{A}^{p,q}(M,E))}.
\end{equation*}
The ${\ast}_E$-operator can be used to define
\begin{equation}\label{eq:dbar*E}
\delbar_E^{\ast}:=-{\ast}_{E^*}\circ\delbar_{E^*}\circ{\ast}_E
\end{equation}
and hence, the Laplace operator and its harmonic forms:
\begin{gather*}
\Delta_E:=\delbar_E^{\ast}\delbar_E+\delbar_E\delbar_E^{\ast}\\
\mathcal{H}^{p,q}(M,E)=\{\,\beta\in\textstyle\mathcal{A}^{p,q}(M,E):\Delta_E(\beta)=0\,\}.
\end{gather*}
Assume that $M$ is compact. If we define the Hermitian product $\llangle\cdot,\cdot\rrangle$ on $\mathcal{A}^{p,q}(M,E)$ as
\begin{equation*}
\llangle\alpha,\beta\rrangle=\int_M h(\alpha,\beta)\ast 1,
\end{equation*}
the operator $\delbar_E^*$ is the adjoint of $\delbar_E$ and the operator $\Delta_E$ is self-adjoint with respect to $\llangle\cdot,\cdot\rrangle$. 
With these notations, the following Hodge decomposition holds
\begin{gather*}
\textstyle\mathcal{A}^{p,q}(M,E)=\delbar_E(\mathcal{A}^{p,q-1}(M,E))\oplus\mathcal{H}^{p,q}(M,E)\oplus\delbar_E^{\ast}(\mathcal{A0}^{p,q+1}(M,E)),
\end{gather*}
and $\mathcal{H}^{p,q}(M,E)$ is finite-dimensional. Also the space $\mathcal{H}^{p,q}(M,E)$ projects bijectively onto $H_{\delbar_E}^{p,q}(M,E)$ which also is finite-dimensional.

In the following, we will denote by simply $\delbar$ the operator $\delbar_{E}$, and by $\mathcal{A}^{p,q}(E)$ the space $\mathcal{A}^{p,q}(M,E)$, when the setting is clear.

We will call the elements of $\A^{0,q}(T^{1,0}M)$ as \emph{$(0,q)$-vector forms}. Let us assume $\phi=\xi\otimes V$ is a $(0,1)$-vector form with $\xi\in\A^{0,1}M$ and $V\in T^{1,0}M$. We define the contraction map as
\begin{gather*}
i_{\phi}\colon\A^{p,q}(E)\rightarrow\A^{p-1,q+1}(E)\\
i_{\phi}(\alpha\otimes s)=\xi\wedge i_{V}(\alpha)\otimes s,
\end{gather*}
where $i_V(\alpha)$ is the usual interior product of a vector field and a $(p,q)$-differential form, and we extend by linearity this definition to any $\phi\in\A^{0,1}(T^{1,0}M)$. 
 Analogously, we define $i_{\c\phi}(\alpha\otimes s)=\c\xi\wedge i_{\c{V}}(\alpha)\otimes s$ for the conjugate $\c\phi=\c\xi\otimes\c{V}$. Define also the contraction
\begin{gather*}
i_{\phi}\colon\Gamma(T^{0,1}M)\rightarrow\Gamma(T^{1,0}M)\\
i_{\phi}W=\xi(W)V,
\end{gather*}
and set $i_{\c\phi}\c{W}=\c\xi(\c{W})\c{V}$.
We will also denote the map $i_{\phi}$ by the symbol $\phi\intprod$.

The cohomology of Bott-Chern of $(M,J)$ is the datum of the spaces
\[
H_{BC}^{p,q}(M)=\frac{\Ker(\del\colon \mathcal{A}^{p,q}(M)\rightarrow\mathcal{A}^{p+1,q}(M))\cap\Ker(\delbar\colon\mathcal{A}^{p,q}(M)\rightarrow\mathcal{A}^{p,q+1}(M))}{\im(\del\delbar\colon\mathcal{A}^{p-1,q-1}(M)\rightarrow\mathcal{A}^{p,q}(M))}.
\] 
We denote by 
\begin{equation*}
\tilde\Delta_{BC}=
\del\delbar\delbar^*\del^*+
\delbar^*\del^*\del\delbar+\del^*\delbar\delbar^*\del+\delbar^*\del\del^*\delbar
+\del^*\del+\delbar^*\delbar
\end{equation*}
the fourth order self-adjoint elliptic operator known as the Bott-Chern Laplacian, where
\begin{equation*}
\del^*=-*\del*,\ \ \ \delbar^*=-*\delbar*,
\end{equation*}
and $*$ is the $\C$-antilinear Hodge operator for a Hermitian metric $g$ on $(M,J)$. We denote by
\[
\mathcal{H}_{BC}^{p,q}(M,g)=\Ker(\tilde\Delta_{BC})\cap\mathcal{A}^{p,q}(M), 
\]
the $(p,q)$-Bott-Chern harmonic forms. If $M$ is compact, by Hodge theory, see \cite[Section 2.b]{S}, we have the following isomorphism of vector spaces
\[
\mathcal{H}_{BC}^{p,q}(M,g)\simeq H_{BC}^{p,q}(M),
\]
induced by the identity map.

\section{Review of deformation theory of complex structures}\label{deformations}
For the sake of completeness, we recall the fundamental definitions and results of deformation theory of complex manifolds both in the differentiable and holomorphic settings which will be useful for our purposes.
Let $B$ be a domain of $\R^m$ (resp. $\C^m$) and $\{M_t\}_{t\in B}$ a family of compact  complex manifolds.

\begin{defi}\label{def:def}
We say that $M_t$ \emph{depends differentiably} (resp. \emph{holomorphically}) on $t\in B$ and that $\{M_t\}_{t\in B}$ \emph{forms a differentiable} (resp. \emph{holomorphic}, or \emph{complex analytic}) \emph{family} if there is a differentiable (resp. complex) manifold $\mathcal{M}$ and a differentiable (resp. holomorphic) proper map $\pi$ from $\mathcal{M}$ onto $B$ such that
\begin{enumerate}
\item $\pi^{-1}(t)=M_t$ as a complex manifold for every $t\in B$,
\item the rank of the Jacobian of $\pi$ is equal to the dimension (resp. complex dimension) of $B$ at each point of $\mathcal{M}$.
\end{enumerate}
We will sometimes denote by $(\mathcal{M},\pi,B)$ the differentiable (resp. complex analytic) family $\{M_{t}\}_{t\in B}$.
\end{defi}
It follows from $(2)$ of the definition that every $M_t$, for $t\in B$, is a submanifold (resp. complex submanifold) of $\mathcal{M}$.
\begin{defi}
If $M$, $N$ are compact  complex manifolds, we say that $M$ \emph{is a differentiable} (resp. \emph{holomorphic}) \emph{deformation of $N$} if there exists a differentiable (resp. holomorphic) family $\{M_t\}_{t\in B}$ over a domain $B$ of $\R^m$ (resp. $\C^m$), with $M_{t_0}=M$, $M_{t_1}=N$ for some $t_0,t_1\in B$.
\end{defi}
A classical theorem by Ehresmann, see \cite{E47} or \cite[Proposition 6.2.2]{Huy04}, shows that if $\{M_t\}_{t\in B}$ is a differentiable family of complex manifolds, then $M_{t_1}$ and $M_{t_2}$ are diffeomorphic as differentiable manifolds for any $t_1,t_2\in B$. Hence, from the differentiable point of view, it holds
\begin{equation}\label{eq:ehr_thm}
\mathcal{M}\simeq M_{t_0}\times B,
\end{equation}
i.e., the manifold $\mathcal{M}$ can be regarded as the product of a fixed $M_{t_0}$, for $t_0\in B$, and the base manifold $B$.

Let $(\mathcal{M},\pi,B)$ be a differentiable family of compact complex manifolds over $B$. For the sake of simplicity we assume $t_0=0$ and $B=B(0,1)\subset\R^m$, i.e. $B=\{t\in\R^m : |t|<1 \}$. 

Let us consider a system of local coordinates $\{\mathcal{U}_j,(\zeta_j,t)\}$ of $\mathcal{M}$ such that each $\mathcal{U}_j$ can be identified with
\begin{equation*}
\{(\zeta_j(p),t(p)): |\zeta_j(p)|<1,|t(p)|<1\}, \quad \pi(\zeta_j(p),t(p))=t(p),
\end{equation*}
with transition functions $f_{jk}$, which identify points in $\mathcal{U}_j\cap\mathcal{U}_k\neq\emptyset$ by
\[
\zeta_k=f_{jk}(\zeta_j,t),
\]
and which are differentiable on $(z,t)$ and are holomorphic on $z$ for any fixed $t$.

By (\ref{eq:ehr_thm}), we can describe local coordinates of $\mathcal{U}_j$ as differentiable functions of coordinates of $M_0=\pi^{-1}(0)$:
\begin{equation}\label{eq:z_zeta}
\zeta_j=\zeta_j(z,t),
\end{equation}
where $z$ are local coordinates on $M_0$. We note that $\zeta_j(z,t)$ is a differentiable function of $(z,t)$, whereas it depends holomorphically on $z$ for a fixed value of $t$.

With the aid of the expressions (\ref{eq:z_zeta}) for the coordinates, we can actually describe the complex structure on each $M_t$, $t\in B$, via a smooth $(0,1)$-vector form $\phi(t)\in\A^{0,1}(T^{1,0}M_0)$, defined starting from the local transition functions $f_{jk}$ (see \cite[page 150]{MK}).

In fact, since both $\{\zeta_j^1(z,0),\dots,\zeta_j^n(z,0)\}$ and $\{z^1,\dots,z^n\}$ are local holomorphic coordinates on $M_0$, where $n=\dim_{\C}M_0$,
\[
\det\left(\frac{\del \zeta_j^{\alpha}(z,0)}{\del z^{\lambda}}\right)_{\alpha}^\lambda\neq 0.
\]
Therefore, in a small neighborhood of $t=0$
\[
\det\left(\frac{\del \zeta_j^{\alpha}(z,t)}{\del z^{\lambda}}\right)_{\alpha}^\lambda\neq 0.
\]
Set $A:=\left(\left(\frac{\del \zeta_j^{\alpha}(z,t)}{\del z^{\lambda}}\right)_{\alpha}^\lambda\right)^{-1}$. Therefore, the local expression
\begin{equation}\label{eq:phi(t)_coord}
\phi(t)=\sum_{\lambda=1}^n\,\phi^{\lambda}\otimes \frac{\del}{\del z^{\lambda}},
\end{equation}
with, for each $\lambda\in\{1,\dots,n\}$,
\begin{equation}\label{eq:phi(t)_coord2}
\phi^{\lambda}=\sum_{\alpha=1}^nA_{\alpha}^\lambda\delbar\zeta_j^{\alpha}\in\mathcal{A}^{0,1}(M_0)
\end{equation}
defines a global $(0,1)$-vector form on $M_0$.

We notice that, by equations \eqref{eq:phi(t)_coord} and \eqref{eq:phi(t)_coord2}, it holds
\[
i_{\phi(t)}\,\,\zeta_j^{\alpha}(z,t)=\sum_{\lambda=1}^n\,\,\phi^{\lambda}\,\frac{\del\zeta_j^{\alpha}}{\del z^{\lambda}}=\delbar\zeta_j^{\alpha}(z,t)
\]
or equivalently
\begin{equation*}
\left(\delbar-\sum_{\lambda=1}^n\phi^{\lambda}\otimes\frac{\del}{\del z^{\lambda}}\right)\zeta_j^{\alpha}(z,t)=0.
\end{equation*}
It can be proved (see \cite[Chapter 4, Proposition 1.2]{MK}) that the (local) holomorphic functions on each $M_t$ are defined as the differentiable functions $f$ defined on open sets of $M_0$ which are solutions to equation
\begin{equation}\label{eq:hol_fun}
\left(\delbar-\sum_{\lambda=1}^n\phi^{\lambda}\otimes\frac{\del}{\del z^{\lambda}}\right)f(z,t)=0,
\end{equation}
i.e., the complex structure on each $M_t$, for $t$ small enough, is encoded in the $(0,1)$-vector form $\phi(t)$.

On the spaces $\A_{q}:=\mathcal{A}^{0,q}(T^{1,0}M_0)$, $q\in\{1,\dots,n\}$, a bracket can be defined in the following way. Let $\Psi=\sum\psi^{\alpha}\del_{\alpha}$ and $\Xi=\sum\xi^{\alpha}\del_{\alpha}$ be respectively $(0,p)$- and a $(0,q)$-vector forms, where $\del_{\alpha}=\de{}{z^\alpha}$. Then
\begin{equation}\label{eq:bracket}
[\,\Psi\,,\,\Xi\,]:=\sum_{\alpha,\beta=1}^n\big(\psi^{\alpha}\wedge\del_{\alpha}\xi^{\beta}-(-1)^{pq}\xi^{\alpha}\wedge\del_{\alpha}\psi^{\beta}\big)\del_{\beta}\quad\in\,\A_{p+q}.
\end{equation}
In particular $[\,,\,]$ is bilinear and satisfies the following
\begin{enumerate}
\item $[\Psi,\Xi]=-(-1)^{pq}[\Xi,\Psi]$,
\item $\delbar[\Psi,\Xi]=[\delbar\Psi,\Xi]+(-1)^p[\Psi,\delbar\Xi]$,
\item $(-1)^{pr}[\Psi[\Xi,\Phi]]+(-1)^{qp}[\Xi,[\Phi,\Psi]]+(-1)^{rq}[\Phi,[\Psi,\Xi]]$=0,
\end{enumerate}
if $\Psi\in\A_p$, $\Xi\in\A_q$ and $\Phi\in\A_r$.

A classical results (see \cite[Chapter 4, Theorem 1.1]{MK}) shows that the deformations of the complex structure on a compact complex manifold can be characterized according to the following theorem.
\begin{thm}\label{thm:Kod-psi}
If $(\mathcal{M},\pi,B)$ is a differentiable family of compact complex manifolds, then the complex structure on each $M_t=\pi^{-1}(t)$ is represented by the vector $(0,1)$-form $\phi(t)\in\A_1$ just constructed on $M_0$, such that $\phi(0)=0$ and
\begin{equation}\label{eq:MC-eq}
\delbar\phi(t)-\frac{1}{2}[\phi(t),\phi(t)]=0\qquad\textit{(Maurer-Cartan equation).}
\end{equation}
\end{thm}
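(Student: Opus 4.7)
The plan is to read off both assertions directly from the construction of $\phi(t)$ and from the characterization (\ref{eq:hol_fun}) of holomorphic functions on $M_t$. The normalization $\phi(0)=0$ is immediate: at $t=0$ the local coordinates $\zeta_j^\alpha(z,0)$ are themselves holomorphic functions of $z$ on $M_0$, so $\delbar\zeta_j^\alpha(z,0)=0$; because the Jacobian matrix $(\del\zeta_j^\alpha/\del z^\lambda)_{\alpha,\lambda}$ is non-singular, the defining formula $\phi^\lambda=\sum_\alpha A_\alpha^\lambda\,\delbar\zeta_j^\alpha$ forces each component to vanish at $t=0$.

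For the Maurer--Cartan equation I would start from the identity
\[
\bigl(\delbar-i_{\phi(t)}\bigr)\zeta_j^\alpha(z,t)=0,
\]
which is built into the very construction of $\phi(t)$. Applying $\delbar$ to both sides and using $\delbar^{\,2}=0$ gives $\delbar\bigl(i_{\phi(t)}\zeta_j^\alpha\bigr)=0$, i.e.
\[
\sum_{\lambda=1}^n\delbar\!\left(\phi^\lambda\,\frac{\del\zeta_j^\alpha}{\del z^\lambda}\right)=0.
\]
The Leibniz rule, combined with the commutation of $\del/\del z^\lambda$ with $\delbar$ in the fixed background coordinates $z$ on $M_0$, rewrites this as
\[
\sum_\lambda(\delbar\phi^\lambda)\,\frac{\del\zeta_j^\alpha}{\del z^\lambda}-\sum_\lambda\phi^\lambda\wedge\del_\lambda(\delbar\zeta_j^\alpha)=0.
\]
Substituting $\delbar\zeta_j^\alpha=\sum_\mu\phi^\mu\,\del_\mu\zeta_j^\alpha$ into the second sum produces a cross-term of shape $-\phi^\lambda\wedge(\del_\lambda\phi^\mu)\,\del_\mu\zeta_j^\alpha$ together with a quadratic remainder in $\phi^\lambda\wedge\phi^\mu\,\del_\lambda\del_\mu\zeta_j^\alpha$, and the latter vanishes by the symmetry of second partials paired against the antisymmetry of $\phi^\lambda\wedge\phi^\mu$.

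Collecting the coefficient of each $\del\zeta_j^\alpha/\del z^\mu$ and invoking invertibility of the Jacobian for $t$ small, the equation reduces componentwise to
\[
\delbar\phi^\mu=\sum_\lambda\phi^\lambda\wedge\del_\lambda\phi^\mu,\qquad\mu=1,\dots,n.
\]
Specializing (\ref{eq:bracket}) to $p=q=1$, where $(-1)^{pq}=-1$ yields $[\phi(t),\phi(t)]=2\sum_{\lambda,\mu}\phi^\lambda\wedge\del_\lambda\phi^\mu\,\del_\mu$, these identities repackage as $\delbar\phi(t)-\tfrac12[\phi(t),\phi(t)]=0$, which is precisely (\ref{eq:MC-eq}).

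I expect the main obstacle to be pure bookkeeping rather than any conceptual hurdle: keeping the sign $(-1)^{pq}=-1$ straight in (\ref{eq:bracket}) so that the factor $\tfrac12$ emerges correctly, and ensuring that the Leibniz expansion together with $\del_\lambda\delbar=\delbar\del_\lambda$ in the background chart on $M_0$ leaves exactly the bracket-shaped remainder after the symmetric term is discarded. A secondary subtlety is that the computation takes place in local charts $\mathcal{U}_j$, but since $\phi(t)$ is globally defined on $M_0$ and the componentwise vanishing holds in every chart, one obtains the claimed global equation on the $(0,2)$-vector form $\delbar\phi(t)-\tfrac12[\phi(t),\phi(t)]\in\A_2$.
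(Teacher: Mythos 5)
Your proposal is correct and is precisely the classical argument: the paper itself offers no proof of this theorem, deferring to \cite[Chapter 4, Theorem 1.1]{MK}, and your computation (applying $\delbar$ to the defining identity $\delbar\zeta_j^\alpha=i_{\phi(t)}\zeta_j^\alpha$, killing the symmetric second-derivative term against the antisymmetric wedge, and using invertibility of the Jacobian to extract $\delbar\phi^\mu=\sum_\lambda\phi^\lambda\wedge\del_\lambda\phi^\mu$) is exactly the proof given there, with the sign $(-1)^{pq}=-1$ in (\ref{eq:bracket}) correctly producing the factor $\tfrac12$.
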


As for the existence of deformations of compact complex manifolds, we refer to the general theory known as \emph{Kuranishi theory}.

Let $M$ be a compact complex manifold. Fix an Hermitian metric $h$ on $M$, extend it to $\mathcal{A}_q$ and denote it by the same symbol $h$. Define and inner product on $\mathcal{A}_q$ by
\[
\llangle\Psi,\Xi\rrangle=\int_M h(\Psi,\Xi)*1,
\]
where $\Psi,\Xi\in\mathcal{A}_q$, $\ast$ is the $\C$-antilinear Hodge operator. We also define the Laplacian on $\mathcal{A}_q$ by
\[
\square=\delbar^*\delbar+\delbar\delbar^*,
\]
where $\delbar^*$ is the adjoint operator of $\delbar$ with respect to the Hermitian metric $h$. The space of harmonic forms is
\[
\mathcal{H}^q=\{\Psi\in\mathcal{A}_q:\square\Psi=0\}.
\]
The Hodge theory induces a decomposition on the space $\mathcal{A}_q$ as a direct sum of orthogonal subspaces:
\[
\mathcal{A}_q=\mathcal{H}^q\oplus\square\mathcal{A}_q.
\]
The operator $G\colon\mathcal{A}_q\rightarrow
\square\mathcal{A}_q$ is well defined and acts on $\mathcal{A}_q$ as the projection onto $\square\mathcal{A}_q$, whereas the operator $H$ is the well-defined projection operator onto $\mathcal{H}^q$.
\begin{thm}[Kuranishi]\label{thm-Kur}
Let $M$ be a compact complex manifold, $\{\eta_{\nu}\}$ a basis for $\mathcal{H}^1$. Let $\phi(t)$ be the $(0,1)$-vector form which is a power series solution of the equation
\begin{equation}\label{eq:thm-Kur}
\phi(t)=\eta(t)+\frac{1}{2}\delbar^* G[\phi(t),\phi(t)],
\end{equation}
where $\eta(t)=\sum_{\nu=1}^m t_{\nu}\eta_{\nu}$, $|t|<r$, $r>0$, and let $S=\{t\in B_r(0): H[\phi(t),\phi(t)]=0\}$. 
Then for each $t\in S$, $\phi(t)$ determines a complex structure $M_t$ on $M$.
\end{thm}
The space $S$ is called the \emph{space of Kuranishi}.
The proof of Theorem \ref{thm-Kur} shows that a $(0,1)$-vector form $\phi(t)$ satisfying equation (\ref{eq:thm-Kur}) can be constructed as a converging power series
\[
\phi(t)=\sum_{\mu=1}^{\infty}\phi_{\mu}(t)
\]
in which the forms
\[
\phi_{\mu}(t)=\sum_{\nu_{1}+\dots+\nu_{m}=\mu}\phi_{\nu_1\dots\nu_m}t_1^{\nu_1}\cdots t_m^{\nu_m},\quad \phi_{\nu_1\dots\nu_m}\in\mathcal{A}_1,
\]
are determined via a recursive formula. In fact, if $\{\eta_{\nu}\}_{\nu=1}^n$ is a basis for $\mathcal{H}^1$ and we set $\psi_1(t)=\sum_{\nu=1}^mt_{\nu}\eta_{\nu}$, 
equation (\ref{eq:thm-Kur}) assures that each term $\phi_{\mu}$ can be computed as
\begin{equation}\label{eq:rec-Kur}
\phi_{\mu}(t)=\frac{1}{2}\delbar^* G\,\Big(\,\sum_{\kappa=1}^{\mu-1}\,\,[\phi_{\kappa}(t),\phi_{\mu-\kappa}(t)]\,\Big).
\end{equation}

In general $S$ can have singularities and hence may not have a structure of smooth manifold. Nonetheless, $\{M_t\}_{t\in S}$ can be proven to be a locally complete family of complex manifolds and therefore can be still be interpreted as a complex analytic family, see \cite{Kur65}.

As a first step to understand deformations, it makes sense to study how the decompositions of the complexified cotangent bundle $(T_{\C}M)^*$ and its powers $\bigwedge_{\C}^k(M)$ vary along with $M_t$, for a differentiable family $(\mathcal{M},\pi,B)$. For simplicity, we suppose that $B=I=(-\epsilon,\epsilon)\subset\R$, for $\epsilon>0$.

Let us denote the central fiber $M_0=\pi^{-1}(0)$ by $M$ and let us suppose $\phi(t)\in\A_1$ is the $(0,1)$-vector form describing $(\mathcal{M},\pi,B)$. If we denote by $i_{\phi(t)}^k:=\underbrace{i_{\phi(t)}\circ\dots\circ i_{\phi(t)}}_{k\,\,\text{times}}$ and $\overline{\phi(t)}\in\A^{1,0}(T^{0,1}M)$ the conjugate of $\phi(t)$, in the following operators
\begin{equation*}
e^{i_{\phi(t)}}=\sum_{k=0}^{\infty}\frac{1}{k!}i_{\phi(t)}^k \qquad\text{and}\qquad e^{i_{\c{\phi(t)}}}=\sum_{k=0}^{\infty}\frac{1}{k!}i_{\overline{\phi(t)}}^k
\end{equation*}
the summations are finite, since the dimension of $M$ is finite. As in \cite[Definition 2.8]{RZ}, we define the \emph{extension map}
\begin{align}\label{eq:isom_0t}
e^{i_{\phi(t)}|i_{\overline{\phi(t)}}}\colon\A^{p,q}(M)&\rightarrow\A^{p,q}(M_t),
\end{align}
where, if $\alpha=\alpha_{i_1\dots i_p j_1\dots j_q}dz^{i_1}\wedge\dots\wedge dz^{i_p}\wedge d\overline{z}^{j_1}\wedge\dots\wedge d\overline{z}^{j_q}$ is a $(p,q)$-differential form on $M$ with $\alpha_{i_1\dots i_p j_1\dots j_q}$ differentiable functions on $M$ with complex values, we set
\begin{equation}\label{def-exp}
e^{i_{\phi(t)}|i_{\overline{\phi(t)}}}(\alpha)=\alpha_{i_1\dots i_p j_1\dots j_q}e^{i_{\phi(t)}}(dz^{i_1}\wedge\dots\wedge dz^{i_p})\wedge e^{i_{\overline{\phi(t)}}}(d\overline{z}^{j_1}\wedge\dots\wedge d\overline{z}^{j_q}).
\end{equation}
Indeed, we have the following lemma, see \cite[Lemma 2.9, 2.10]{RZ}.
\begin{lem}\label{thm-exp}
For any $p,q$ and for $t$ small, the map $e^{i_{\phi(t)}|i_{\overline{\phi(t)}}}\colon\A^{p,q}(M)\rightarrow\A^{p,q}(M_t)$ is a real linear isomorphism.
\end{lem}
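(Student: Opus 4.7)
\emph{Plan.} The idea is to realise, at each point of $M$, the map $e^{i_{\phi(t)}|i_{\overline{\phi(t)}}}$ as a pointwise change of basis between the $M$-bigrading and the $M_t$-bigrading on $\Lambda^{p,q}T^*_{\C}M$, and then verify that this change of basis is invertible for $t$ small.

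First I would observe that the two exponentials collapse to a single first-order correction on generators. Since $i_{\phi(t)}:\A^{p,q}(M)\to\A^{p-1,q+1}(M)$, we have $i_{\phi(t)}(dz^i)=\phi^i(t)\in\A^{0,1}(M)$ and then $i_{\phi(t)}^2(dz^i)=0$, so
\[
e^{i_{\phi(t)}}(dz^i)=dz^i+\phi^i(t),\qquad e^{i_{\overline{\phi(t)}}}(d\c z^j)=d\c z^j+\overline{\phi^j(t)}.
\]
A short check shows that $i_{\phi(t)}$ is an \emph{even} graded derivation of the exterior algebra (the sign picked up in $\alpha\wedge \xi=(-1)^{|\alpha|}\xi\wedge\alpha$ exactly cancels the Leibniz sign for $i_V$), so $e^{i_{\phi(t)}}$ is an algebra homomorphism on generators and the formula (\ref{def-exp}) is the expected substitution. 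To identify the type on $M_t$ I would use (\ref{eq:hol_fun}): for any $M_t$-holomorphic function $f$,
\[
df=\del f+\delbar f=\sum_i(\del_i f)dz^i+\sum_\lambda(\del_\lambda f)\phi^\lambda(t)=\sum_i(\del_i f)\bigl(dz^i+\phi^i(t)\bigr),
\]
which shows that $dz^i+\phi^i(t)\in T^{*1,0}M_t$ and, by conjugation, $d\c z^j+\overline{\phi^j(t)}\in T^{*0,1}M_t$. Consequently $e^{i_{\phi(t)}|i_{\overline{\phi(t)}}}(\alpha)\in\A^{p,q}(M_t)$ whenever $\alpha\in\A^{p,q}(M)$.

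For bijectivity I would examine the $2n\times 2n$ change-of-basis matrix on $T^*_{\C}M$ from $\{dz^i,d\c z^j\}$ to $\{dz^i+\phi^i(t),\, d\c z^j+\overline{\phi^j(t)}\}$: since $\phi(0)=0$, by compactness of $M$ this matrix tends to the identity uniformly as $t\to 0$ and is therefore invertible everywhere for $t$ small. The corresponding wedge products give a pointwise basis of $\Lambda^{p,q}_{M_t}$, so the substitution (which keeps the $\C$-valued coefficient functions $\alpha_{i_1\dots j_q}$ fixed) is a pointwise $\C$-linear bijection, and real linearity follows a fortiori. The only genuinely delicate step is the type identification, for which (\ref{eq:hol_fun}) is the decisive input; once $dz^i+\phi^i(t)$ and $d\c z^j+\overline{\phi^j(t)}$ have been recognised as $(1,0)$- and $(0,1)$-generators for $M_t$, the rest is linear algebra and a standard small-parameter invertibility argument.
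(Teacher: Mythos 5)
Your argument is correct, and it is essentially the intended one: the paper itself gives no proof of this lemma but defers to \cite[Lemmas 2.9, 2.10]{RZ}, whose proof likewise reduces the extension map to the pointwise change of frame $dz^i\mapsto dz^i+\phi^i(t)$, $d\c z^j\mapsto d\c z^j+\overline{\phi^j(t)}$ on $T^*_{\C}M$, identifies the images as $(1,0)$- and $(0,1)$-frames for $M_t$ via the defining equation for $M_t$-holomorphic functions, and concludes invertibility for small $t$ by compactness. Your observations that $i_{\phi(t)}$ is an even derivation squaring to zero on degree-one generators (so that (\ref{def-exp}) is a well-defined, coordinate-free substitution) and that $d\zeta^\alpha=\sum_i\frac{\del\zeta^\alpha}{\del z^i}(dz^i+\phi^i)$ with $\bigl(\frac{\del\zeta^\alpha}{\del z^i}\bigr)$ invertible are exactly the two points that make the argument go through.
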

Moreover, the following decompositions hold
\begin{equation}\label{eq:decomp_t}
\A_{\C}^k(M)=\oplus_{p+q=k}\A^{p,q}(M_t),\qquad k\in\{1,\dots,n\}.
\end{equation}

\begin{rmk}\label{rmk-int}
We observe that, for a $(0,1)$-vector form $\phi(t)\in\A_1$ on $M_0$ such that $\phi(0)=0$, the Maurer-Cartan equation (\ref{eq:MC-eq}) is equivalent to the integrability of the complex structure $J_t$ on $M_t$, i.e.,
\begin{equation}\label{eq:integr}
(d\alpha)^{0,2}=0 \qquad \forall\alpha\in\A^{1,0}(M_t),
\end{equation}
where $(d\alpha)^{0,2}$ is the component in $\mathcal{A}^{0,2}(M_t)$ of the $2$-form $d\alpha$, according to decomposition (\ref{eq:decomp_t}). Indeed, from Lemma \ref{thm-exp} it immediately follows $(I-\phi)\intprod:\Gamma(T^{1,0}M)\to \Gamma(T^{1,0}M_t)$ is an isomorphism for $t$ small, and  for $X,Y\in \Gamma(T^{1,0}M)$
\begin{equation*}
-d(\alpha+e^{i_{\phi(t)}|i_{\overline{\phi(t)}}}(\alpha))(X-\phi(t)(X),Y-\phi(t)(Y))=\alpha\left((\delbar\phi(t)-\frac{1}{2}[\phi(t),\phi(t)])(X,Y)\right).
\end{equation*}
See also \cite[Proposition 6.1.2]{Huy04}.
Furthermore, for a $(0,1)$-vector form satisfying (\ref{eq:thm-Kur}), the defining property of $S$, i.e., $H[\phi(t),\phi(t)]=0$, is equivalent to the integrability condition given by the Maurer-Cartan equation (\ref{eq:MC-eq}) (see \cite[Chapter 4, Proposition 2.5]{MK}).
\end{rmk}

\section{Proof of Theorem \ref{thm:main}}\label{main}
Let $(\mathcal{M},\pi,I)$ be a differentiable family of compact complex manifolds parametrized by $\phi(t)$, for $t\in I$, $I=(-\epsilon,\epsilon)$, $\epsilon>0$.
We need to recall formulas for the differential operators $\del_t$ and $\delbar_t$, defined as
\begin{gather*}
\del_t:=\pi_t^{p+1,q}\circ d\colon \A^{p,q}(M_t)\rightarrow\A^{p+1,q}(M_t),\\
\delbar_t:=\pi_t^{p,q+1}\circ d\colon \A^{p,q}(M_t)\rightarrow\A^{p,q+1}(M_t),
\end{gather*}
for any $p,q$, with $\pi_t^{p+1,q}$ and $\pi_t^{p,q+1}$ the usual projections of $d(\A^{p,q}(M_t))$ with respect to the decompositions (\ref{eq:decomp_t}).

We take as main reference \cite{RZ}.
Starting from $(0,0)$-differential forms, i.e., differentiable complex functions, we have
\begin{gather}
\del_t f=e^{i_{\phi}}\Big((I-\phi\overline{\phi})^{-1}\intprod(\del-\overline{\phi}\intprod\delbar)f\Big),\\
\delbar_t f=e^{i_{\overline{\phi}}}\Big((I-\overline{\phi}\phi)^{-1}\intprod(\delbar-\phi\intprod\del)f \Big),
\end{gather}
where $\phi\overline{\phi}=\overline{\phi}\intprod\phi$, $\overline{\phi}\phi=\phi\intprod\overline{\phi}$ and we omit the dependence on $t$ of $\phi$, see \cite[Equation (2.13)]{RZ}.
We will denote by $\Finv$ the simultaneous contraction on each component of complex differential form, i.e.
\begin{align*}
\psi\Finv\alpha:=
\alpha_{i_1\dots i_p j_1\dots j_q}\psi\intprod dz^{i_1}\wedge \dots\wedge \psi\intprod dz^{i_p}\wedge \psi\intprod d\overline{z}^{j_1}\wedge \dots \wedge \psi\intprod d\overline{z}^{j_q},
\end{align*}
for $\psi\in\A_1+\c{\A_1}$ and for any $(p,q)$-form locally written as $\alpha=\alpha_{i_1\dots i_p j_1\dots j_q}dz^{i_1}\wedge\dots\wedge dz^{i_p}\wedge d\overline{z}^{j_1}\wedge\dots\wedge d\overline{z}^{j_q}$. This contraction is well-defined and it can be used to describe the extension map, in fact
\begin{equation*}
e^{i_{\phi(t)}|i_{\overline{\phi(t)}}}=(I+\phi+\overline{\phi})\Finv.
\end{equation*}
With these notations, from the proof of \cite[Proposition 2.13]{RZ}, we can summarize the action of the operators $\del_t$ and $\delbar_t$ on differential forms $e^{i_{\phi(t)}|i_{\overline{\phi(t)}}}\alpha\in\A^{p,q}(M_t)$, with $\alpha\in\A^{p,q}(M)$. Then,
\begin{align}
\del_t(e^{i_{\phi}|i_{\overline{\phi}}}\alpha)&=e^{i_{\phi}|i_{\overline{\phi}}}\Big((I-\phi\overline{\phi})^{-1}\Finv([\delbar,i_{\overline{\phi}}]+\del)(I-\phi\overline{\phi})\Finv \alpha\Big)
,\label{delt}\\
\delbar_t(e^{i_{\phi}|i_{\overline{\phi}}}\alpha)&=e^{i_{\phi}|i_{\overline{\phi}}}\Big((I-\overline{\phi}\phi)^{-1}\Finv([\del,i_{\phi}]+\delbar)(I-\overline{\phi}\phi)\Finv\alpha\Big)
.\label{delbart}
\end{align}

Now we have all the ingredients to prove our main result Theorem \ref{thm:main}. Let us fix $(M,J,g,\omega)$ a compact Hermitian manifold and suppose that $g$ is SKT, i.e. $\del\delbar\omega=0$. We want to find necessary conditions under which the property of being SKT is stable for a smooth family of Hermitian metrics $\{\omega_t\}_{t\in I}$ such that $\omega_0=\omega$, along a deformation of the complex structure parametrized by a $(0,1)$-vector form $\phi(t)$. 

\begin{proof}[Proof of Theorem \ref{thm:main}]
The metrics $\omega_t$ are SKT for every $t\in I$, i.e., $\del_t\delbar_t\omega_t=0$. This implies
\begin{equation}\label{eq:deldelbart}
\de{}{t}(\del_t\delbar_t\omega_t)_{|t=0}=0.
\end{equation}
Let us compute equation (\ref{eq:deldelbart}) using the expressions (\ref{delt}) and (\ref{delbart}) for $\del_t$ and $\delbar_t$. First we calculate $\delbar_t(\omega_t)$
\begin{align*}
\delbar_t(\omega_t)&=e^{i_{\phi}|i_{\overline{\phi}}}\Big((I-\overline{\phi}\phi)^{-1}\Finv([\del,i_{\phi}]+\delbar)(I-\overline{\phi}\phi)\Finv\omega(t)\Big),
\end{align*}
and then $\del_t\delbar_t(\omega_t)$,
\begin{align*}
\del_t\delbar_t(\omega_t)&=e^{i_{\phi}|i_{\overline{\phi}}}\Big((I-\phi\overline{\phi})^{-1}\Finv([\delbar,i_{\overline{\phi}}]+\del)(I-\phi\overline{\phi})\Finv (I-\overline{\phi}\phi)^{-1}\Finv([\del,i_{\phi}]+\delbar)(I-\overline{\phi}\phi)\Finv\omega(t)\Big).
\end{align*}
Now, to compute equation (\ref{eq:deldelbart}), we develop $\del_t\delbar_t(\omega_t)$ in Taylor series centered in $t=0$ up to the first order. Note that
\begin{equation*}
\phi(t)=t\phi'(0)+o(t)
\end{equation*}
implies
\begin{equation*}
(I-\phi\overline{\phi})=(I-\overline{\phi}\phi)=(I-\phi\overline{\phi})^{-1}=(I-\overline{\phi}\phi)^{-1}=I+o(t).
\end{equation*}
Therefore we get
\begin{align*}
\del_t\delbar_t(\omega_t)&=(I+t\phi'(0)+t\overline{\phi'(0)})\Finv([\delbar,{t\overline{\phi'(0)}}\intprod]+\del) ([\del,{t\phi'}(0)\intprod]+\delbar)\Big(\omega(0)+t\omega'(0)\Big)+o(t)\\
&=(I+t\phi'(0)+t\overline{\phi'(0)})\Finv([\delbar,{t\overline{\phi'(0)}}\intprod]+\del)\Big([\del,{t\phi'}(0)\intprod]\omega(0)+\delbar\omega(0)+t\delbar\omega'(0)\Big)+o(t)\\
&=(I+t\phi'(0)+t\overline{\phi'(0)})\Finv\Big(-t\del({\phi'}(0)\intprod\del\omega(0))+t\delbar({\overline{\phi'(0)}}\intprod\delbar\omega(0))+t\del\delbar\omega'(0)\Big)+o(t)\\
&=-t\del({\phi'}(0)\intprod\del\omega(0))+t\delbar({\overline{\phi'(0)}}\intprod\delbar\omega(0))+t\del\delbar\omega'(0)+o(t),
\end{align*}
implying
\begin{equation*}
0=\de{}{t}(\del_t\delbar_t\omega_t)_{|t=0}=-\del({\phi'}(0)\intprod\del\omega(0))+\delbar({\overline{\phi'(0)}}\intprod\delbar\omega(0))+\del\delbar\omega'(0),
\end{equation*}
which is equivalent to equation \eqref{eq:main}.
\end{proof}

\section{Applications}\label{applications}
We now apply Corollary \ref{cor:main} and Theorem \ref{thm:main} to study two $4$-dimensional complex nilmanifolds admitting invariant SKT metrics. In particular, we study obstructions along a specific family of deformations on a family of nilmanifolds introduced in \cite[Section 2.3]{FT11} and  on a quotient of the product of two copies of the real Heisenberg group $\mathbb{H}(3;\R)$ and $\R^2$ presented in \cite[Example 8]{RT12}. 

In the following, we may refer to one-dimensional differentiable families of complex manifolds $\{M_t\}_{t\in I}$, $I=(-\epsilon,\epsilon)$, $\epsilon>0$, by the terminology \emph{curves of complex structures}.

\subsection{Example 1}
Let us consider the Lie algebra $\mathfrak{g}$ endowed with integrable almost complex structure $J$ such that $\mathfrak{g}^*$ is spanned by $\{\eta^1,\dots,\eta^4\}$, a set of $(1,0)$ complex differential forms with structure equations
\begin{equation}\label{eq:RT_struct_eq_0}
\begin{cases}
\,\,d\eta^i&=0,\qquad i\in\{1,2,3\},\\
\,\,d\eta^4&=a_1\eta^{12}+a_2\eta^{13}+a_3\eta^{1\overline{1}}+a_4\eta^{1\overline{2}}+a_5\eta^{1\overline{3}}\\
&+a_6\eta^{23}+a_7\eta^{2\overline{1}}
+a_8\eta^{2\overline{2}}+a_9\eta^{2\overline{3}}\\
&+a_{10}\eta^{3\overline{1}}+a_{11}\eta^{3\overline{2}}+a_{12}\eta^{3\overline{3}},
\end{cases}
\end{equation}
with $a_i\in\C$ for $i\in\{1,\dots,12\}$. In particular, $\mathfrak{g}$ is a $2$-step nilpotent Lie algebra depending on the complex parameters $a_1,\dots,a_{12}$. If we denote by $G$ the simply-connected nilpotent Lie group with Lie algebra $\mathfrak{g}$, then for any $a_1,\dots,a_{12}\in\Q[i]$, by Malcev's theorem \cite[Theorem 7]{M62}, there exists a uniform discrete subgroup $\Gamma$ of $G$ such that $M=\Gamma/G$ is a nilmanifold. As in \cite[Theorem 2.7]{FT11}, the invariant Hermitian metric on $M$
\[
g=\frac{1}{2}\sum_{j=1}^4 (\eta^j\otimes\overline{\eta}^j + \overline{\eta}^j\otimes \eta^j)
\]
is Astheno K\"ahler, i.e., the fundamental form of $g$
\begin{equation}\label{eq:RT_omega0}
\omega=\frac{i}{2}\sum_{j=1}^4\eta^j\wedge\overline{\eta}^j
\end{equation}
is such that $\del\delbar\omega^2=0$, if and only if the following equation holds
\begin{equation}\label{eq:RT_AK}
|a_1|^2+|a_2|^2+|a_5|^2+|a_6|^2+|a_7|^2+|a_9|^2+|a_{10}|^2+|a_{11}|^2=2\mathfrak{Re}(a_3\overline{a}_8+a_3\overline{a}_{12}+a_8\overline{a}_{12}).
\end{equation}
Moreover, if $a_8=0$, the Astheno-K\"ahler metric $g$ is SKT if and only if
\begin{equation*}
a_1=a_4=a_6=a_7=a_9=a_{11}=0.
\end{equation*}
Hence, if $a_i=0$ for $i\in\{1,4,6,7,8,9,11\}$  and
\begin{equation}\label{eq:RT_AK-SKT}
|a_2|^2+|a_5|^2+|a_{10}|^2=2\mathfrak{Re}(a_3\overline{a}_{12}),
\end{equation}
from equation (\ref{eq:RT_AK}), the metric $g$ is SKT, i.e., $\del\delbar\omega=0$. From now on, we will consider the nilmanifold $(M,J)$, with Hermitian SKT metric $\omega$.

The structure equations (\ref{eq:RT_struct_eq_0})  boil down to
\begin{align}\label{eq:RTstruct_eq_1}
\begin{cases}
\,\,d\eta^i&=0,\qquad i\in\{1,2,3\},\\
\,\,d\eta^4&=a_2\eta^{13}+a_3\eta^{1\overline{1}}+a_5\eta^{1\overline{3}}+a_{10}\eta^{3\overline{1}}+a_{12}\eta^{3\overline{3}}.
\end{cases}
\end{align}

We consider now the following invariant $(0,1)$-vector form given by
\begin{equation}
\phi(r,s)=r\overline{\eta}^1\otimes Z_1+ s\overline{\eta}^3\otimes Z_3, \qquad (r,s)\in\C^2,\ |r|<1,\  |s|<1,
\end{equation}
where $Z_j$ is the dual of $\eta^j$ in $\mathfrak{g}$, for $j\in\{1,2,3,4\}$. We define the invariant forms $\eta_{r,s}^j:=\eta^j+i_{\phi}(\eta^j)$, for $j\in\{1,2,3,4\}$:
\begin{align*}
\begin{cases}
\,\eta_{r,s}^1&=\eta^1+r\overline{\eta}^1,\\
\,\eta_{r,s}^2&=\eta^2,\\
\,\eta_{r,s}^3&=\eta^3+s\overline{\eta}^3,\\
\,\eta_{r,s}^4&=\eta^4,
\end{cases}
\end{align*}
which form a coframe of $(T^{1,0}M_t)^*$.
It is clear that
\begin{align*}
\begin{cases}
\,\eta^1&=\frac{1}{1-|r|^2}(\eta_{r,s}^1-r\overline{\eta}_{r,s}^1),\\
\,\eta^2&=\eta_{r,s}^2,\\
\,\eta^3&=\frac{1}{1-|s|^2}(\eta_{r,s}^3-s\overline{\eta}_{r,s}^3),\\
\,\eta^4&=\eta_{r,s}^4.
\end{cases}
\end{align*}
Therefore, it can be easily seen that the structure equations for the coframe $\{\eta_{r,s}^1,\eta_{r,s}^2,\eta_{r,s}^3,\eta_{r,s}^4\}$ are:
\begin{align*}\label{eq:RT_struct_def}
d\eta_{r,s}^i&=0, \qquad i\in\{1,2,3\},\\
d\eta_{r,s}^4&=\frac{a_2+\overline{r}a_{10}-\overline{s}a_5}{(1-|r|^2)(1-|s|^2)}\,\,\eta_{r,s}^{13}\,+\,\frac{a_3}{1-|r|^2}\,\,\eta_{r,s}^{1\overline{1}}
\,+\,\frac{a_5-sa_2-\overline{r}sa_{10}}{(1-|r|^2)(1-|s|^2)}\,\,\eta_{r,s}^{1\overline{3}}+\\
&+\frac{a_{10}+ra_2-r\overline{s}a_5}{(1-|r|^2)(1-|s|^2)}\,\,\eta_{r,s}^{3\overline{1}}\,+\,\frac{a_{12}}{1-|s|^2}\,\,\eta_{r,s}^{3\overline{3}}\,+\,\frac{-ra_5+sa_{10}+rsa_2}{(1-|r|^2)(1-|s|^2)}\,\,\eta_{r,s}^{\overline{13}}.
\end{align*}
For the integrability condition $(d\eta_{r,s}^i)^{0,2}=0$, which is equivalent to check the Maurer Cartan equation for $\phi$ by Remark \ref{rmk-int}, we must have that
\begin{equation}\label{eq:RT_space_def}
-ra_5+sa_{10}+rsa_2=0.
\end{equation}
We begin studying this equation by noticing that, if we set $F(r,s)=-ra_5+sa_{10}+rsa_2$, the gradient $\nabla F$ in $(r,s)=(0,0)$ is
\[
\begin{pmatrix}
F_r(0,0)\\
F_s(0,0)
\end{pmatrix}=
\begin{pmatrix}
-a_5\\
a_{10}
\end{pmatrix}.
\]
We distinguish two cases, depending on whether $\nabla F(0,0)=0$ or $\nabla F(0,0)\neq 0$. We observe that in the first case, the solution set, which we will denote by $B$, might not be a smooth manifold, whereas it happens in the latter case.

\subsubsection{Case $(i)$} $\nabla F(0,0)=0$, i.e., $a_{5}=a_{10}=0$.
The solutions of (\ref{eq:RT_space_def}) are
\begin{equation*}
B=\{(r,s)\in\C^2: rsa_2=0, |r|,|s|<\delta\},
\end{equation*}
for $\delta>0$ sufficiently small. The corresponding $(0,1)$-vector form which parametrizes the deformation is $\phi=r\overline{\eta}^1\otimes Z_1 + s\overline{\eta}^3\otimes Z_3$. If we consider the segment $\gamma\colon(-\epsilon,\epsilon)\rightarrow B$, $\gamma(t)=(tu,tv)$ for $(u,v)\in B$, we define the curve of deformations
\[
t\mapsto\phi(t)=tu\,\overline{\eta}^1\otimes Z_1+tv\,\overline{\eta}^3\otimes Z_3.
\]
In this case, $\phi'(0)=u\,\overline{\eta}^1\otimes Z_1 + v\,\overline{\eta}^3\otimes Z_3$. With structure equations
\begin{align*}
\begin{cases}
d\eta^i&=0, \quad i\in\{1,2,3\},\\
d\eta^4&=a_2\eta^{13}+a_3\eta^{1\overline{1}}+a_{12}\eta^{3\overline{3}}\nonumber,
\end{cases}
\end{align*}
we compute $\del\circ i_{\phi'(0)}\circ \del (\omega)$. It turns out that this term vanishes, therefore Corollary \ref{cor:main} gives no obstructions to the existence of curve of SKT metrics along the curve of deformations $t\mapsto \phi(t)$.

\subsubsection{Case $(ii)$} $\nabla F(0,0)\neq 0$, i.e., $(a_5,a_{10})\neq (0,0)$.

We begin by studying the case $a_5\neq 0.$
The set
\[
B=\left\{(r,s)\in\C^2:r=\frac{s a_{10}}{a_5-s a_2},\ |r|<\delta,|s|<\delta'\right\},
\]
for $\delta,\delta'>0$ sufficiently small, is the set of the solutions of equation (\ref{eq:RT_space_def})
\begin{equation*}
- r a_5+s a_{10}+r s a_2=0.
\end{equation*}

If we consider the smooth curve $\gamma\colon(-\epsilon,\epsilon)\rightarrow B$,
\begin{equation}\label{eq:RT_def_curve_1}
\gamma(t)=(\frac{t u a_{10}}{a_5-t u a_2},t u)
\end{equation}
with $u\in\C$, we have that
\[
t\mapsto \phi(t)=\frac{t ua_{10}}{a_5-tu a_2}\,\overline{\eta}^1\otimes Z_1 +t u\overline{\eta}^3\otimes Z_3
\]
is a smooth curve of deformations with $\phi'(0)=\frac{u a_{10}}{a_{5}}\,\overline{\eta}^1\otimes Z_1+ u\,\overline{\eta}^3\otimes Z_3$. By the usual computations and structure equations
\begin{align*}
\begin{cases}
d\eta^i&=0, \quad i\in\{1,2,3\},\\
d\eta^4&=a_2\eta^{13}+a_3\eta^{1\overline{1}}+a_5\eta^{1\overline{3}}+a_{10}\eta^{3\overline{1}}+a_{12}\eta^{3\overline{3}},
\end{cases}
\end{align*}
we obtain that
\[
\del\circ i_{\phi'(0)}\circ \del (\omega)=iua_2\frac{|a_{10}|^2-|a_5|^2}{a_5} \eta^{13\overline{13}}.
\]
We observe that the real form $\eta^{13\overline{13}}$ is closed with respect to $\del$ and $\delbar$. Moreover,
\[
(\del\delbar \ast) \eta^{13\overline{13}}=(|a_2|^2+|a_3|^2+|a_{10}|^2-2\Re(a_3\overline{a}_{12}))\eta^{123\overline{123}}=0,
\]
by equation (\ref{eq:RT_AK-SKT}). Therefore $\eta^{13\overline{13}}$ is harmonic with respect to the Bott-Chern Laplacian and, via the canonical isomorphism, the class $[\eta^{13\overline{13}}]_{BC}$ is a non-vanishing class in $ H_{BC}^{2,2}(M)$. Hence, if
\[
\mathfrak{Im}\left(iua_2\frac{|a_{10}|^2-|a_5|^2}{a_5}\right)\neq 0,
\]
by Corollary (\ref{cor:main}) there exist no family of SKT metrics $\omega_t$ along $t\mapsto \phi(t)$ such that $\omega_0=\omega$.

If instead we assume that $a_{10}\neq 0$, we have that equation (\ref{eq:RT_space_def})
\[
-r a_5+s a_{10}+r s a_2=0 
\]
admits solutions
\begin{equation*}
B=\left\{(r,s)\in\C^2:\, s=\frac{r a_{5}}{a_{10}+r a_2},\  |r|<\delta,|s|<\delta'\right\},
\end{equation*}
with $\delta,\delta'>0$ sufficiently small.

If $\gamma\colon(-\epsilon,\epsilon)\rightarrow B$ is the smooth curve $\gamma(t)=(t v,\frac{t v a_5}{a_{10}+tv a_2})$ with $v\in\C$, we define the curve of deformations by
\begin{equation}\label{eq:RT_def_curve_2}
t\mapsto \phi(t)=t v\,\overline{\eta}^1\otimes Z_1 + \frac{t v a_5}{a_{10}+t v a_2}\overline{\eta}^3\otimes Z_3.
\end{equation}
We notice that $\phi'(0)=v\,\overline{\eta}^1\otimes Z_1+ v \frac{a_5}{a_{10}}\,\overline{\eta}^3\otimes Z_3$. With the aid of structure equations (\ref{eq:RTstruct_eq_1}), we can check that
\begin{equation*}
\del\circ i_{\phi'(0)}\circ \del (\omega)=iva_2\frac{|a_{10}|^2-|a_5|^2}{a_{10}}\eta^{13\overline{1}\overline{3}}.
\end{equation*}
Since $\eta^{13\overline{1}\overline{3}}\in\mathcal{H}_{BC}^{2,2}(M,g)$ and $[\eta^{13\overline{13}}]_{BC}$ does not represent the class $0\in H_{BC}^{2,2}$, therefore, if
\[
\mathfrak{Im}\left(iva_2\frac{|a_{10}|^2-|a_5|^2}{a_{10}}\right)\neq 0,
\]
by Corollary \ref{cor:main}, there is no curve of SKT metrics $\omega_t$ along the curve of deformations $t\mapsto \phi(t)$ such that $\omega_0=\omega$.

Summing up, we gather what we obtained.
\begin{thm}
Let $(M,J)$ be an element of the familiy of nilmanifolds with structure equations
\begin{align*}
\begin{cases}
\,\,d\eta^i&=0,\qquad i\in\{1,2,3\},\\
\,\,d\eta^4&=a_2\eta^{13}+a_3\eta^{1\overline{1}}+a_5\eta^{1\overline{3}}+a_{10}\eta^{3\overline{1}}+a_{12}\eta^{3\overline{3}},
\end{cases}
\end{align*}
$a_2,a_3,a_5,a_{10},a_{12}\in\mathbb{Q}[i]$ such that $|a_2|^2+|a_5|^2+|a_{10}|^2=2\mathfrak{Re}(a_3\overline{a}_{12})$, endowed with the Hermitian SKT metric $\omega=\frac{i}{2}\sum_{j=1}^4\eta^{j\overline{j}}$. Then
\begin{itemize}
\item if $a_5\neq 0$ and $u\in\C$, there exist no smooth curve of SKT metrics $\omega_t$ such that $\omega_0=\omega$  along the curve of deformation $t\mapsto \phi(t)=\frac{t u a_{10}}{a_5-t u a_2}\,\overline{\eta}^1\otimes Z_1 +t u\overline{\eta}^3\otimes Z_3$ for $t\in(-\epsilon,\epsilon),\epsilon>0$, if
\[
\mathfrak{Im}\left(iua_2\frac{|a_{10}|^2-|a_5|^2}{a_5}\right)\neq 0;
\]
\item if $a_{10}\neq 0$ and $v\in\C$, there exist no smooth curve of SKT metrics $\omega_t$ such that $\omega_0=\omega$  along the curve of deformation $t\mapsto \phi(t)=t v\,\overline{\eta}^1\otimes Z_1 + \frac{t v a_5}{a_{10}+tv a_2}\overline{\eta}^3\otimes Z_3$ for $t\in(-\epsilon,\epsilon),\epsilon>0$, if
\[
\mathfrak{Im}\left(iva_2\frac{|a_{10}|^2-|a_5|^2}{a_{10}}\right)\neq 0.
\]
\end{itemize}
\end{thm}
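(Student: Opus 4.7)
The plan is to deduce this theorem as two parallel applications of Corollary \ref{cor:main}, one for each of the two specified curves. Since the preceding discussion has already constructed the relevant deformation curves and verified they lie in the Kuranishi space (i.e.\ satisfy the Maurer--Cartan equation via \eqref{eq:RT_space_def}), what remains is purely the computation of the Bott--Chern obstruction $[\mathfrak{Im}(\del\circ i_{\phi'(0)}\circ \del)(\omega)]_{BC}$ and the verification that the class produced is nonzero in $H_{BC}^{2,2}(M)$.

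First, I would differentiate each $\phi(t)$ at $t=0$: for the first curve one obtains $\phi'(0)=\frac{ua_{10}}{a_5}\overline{\eta}^1\otimes Z_1+u\,\overline{\eta}^3\otimes Z_3$, and for the second $\phi'(0)=v\,\overline{\eta}^1\otimes Z_1+\frac{va_5}{a_{10}}\overline{\eta}^3\otimes Z_3$. Then, using that $d\eta^i=0$ for $i=1,2,3$ and the explicit expression for $d\eta^4$ in \eqref{eq:RTstruct_eq_1}, one computes $\del\omega$ (only terms coming from $j=4$ in $\omega=\tfrac{i}{2}\sum_j \eta^{j\overline{j}}$ contribute), applies the contraction $i_{\phi'(0)}$, and then applies $\del$ again. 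After canceling the numerous cross terms generated by $\eta^{1\overline{1}}$, $\eta^{1\overline{3}}$, $\eta^{3\overline{1}}$, $\eta^{3\overline{3}}$, what survives is a scalar multiple of the single real form $\eta^{13\overline{1}\overline{3}}$, with coefficient $iua_2(|a_{10}|^2-|a_5|^2)/a_5$ in the first case and $iva_2(|a_{10}|^2-|a_5|^2)/a_{10}$ in the second.

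Next, I would verify that $\eta^{13\overline{1}\overline{3}}$ represents a nonzero Bott--Chern class. Invariance and the fact that neither $\eta^1,\eta^3,\overline{\eta}^1,\overline{\eta}^3$ nor their exterior products involve $d\eta^4$ terms give immediately $\del\eta^{13\overline{1}\overline{3}}=\delbar\eta^{13\overline{1}\overline{3}}=0$. For the remaining condition $\del\delbar\ast\eta^{13\overline{1}\overline{3}}=0$, one writes $\ast\eta^{13\overline{1}\overline{3}}$ as a constant multiple of $\eta^{24\overline{2}\overline{4}}$ with respect to the orthonormal coframe, expands $\del\delbar\eta^{24\overline{2}\overline{4}}$ via the structure equations, and observes that the resulting scalar in front of the top form $\eta^{123\overline{123}}$ becomes proportional to $|a_2|^2+|a_5|^2+|a_{10}|^2-2\mathfrak{Re}(a_3\overline{a}_{12})$, which vanishes by the SKT hypothesis \eqref{eq:RT_AK-SKT}. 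Thus $\eta^{13\overline{1}\overline{3}}\in\mathcal{H}_{BC}^{2,2}(M,g)$ and, being nonzero, gives a nontrivial class under the canonical isomorphism with $H_{BC}^{2,2}(M)$.

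Finally, since $\eta^{13\overline{1}\overline{3}}$ is a real form, taking $\mathfrak{Im}$ only affects the scalar coefficient, so $\bigl[\mathfrak{Im}(\del\circ i_{\phi'(0)}\circ\del)(\omega)\bigr]_{BC} = \mathfrak{Im}\!\bigl(iua_2\tfrac{|a_{10}|^2-|a_5|^2}{a_5}\bigr)\,[\eta^{13\overline{1}\overline{3}}]_{BC}$ in case (i), and the analogous expression in case (ii). Corollary \ref{cor:main} then forces this class to be zero; combined with nontriviality of $[\eta^{13\overline{1}\overline{3}}]_{BC}$, the coefficient must vanish, so the stated nonvanishing imaginary-part hypotheses are indeed obstructions to the existence of such smooth curves of SKT metrics. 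The main obstacle I anticipate is the algebraic bookkeeping in the second step: tracking which terms of $\del\, i_{\phi'(0)}\,\del\omega$ survive and verifying that the only surviving contribution is the $\eta^{13\overline{1}\overline{3}}$-term, which relies critically on the vanishing of many of the $a_i$ imposed by the SKT condition.
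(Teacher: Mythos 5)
Your proposal follows essentially the same route as the paper: compute $\phi'(0)$ for each curve, evaluate $\del\circ i_{\phi'(0)}\circ\del(\omega)$ via the structure equations to get the stated multiples of $\eta^{13\overline{1}\overline{3}}$, show that form is Bott--Chern harmonic (using the SKT relation $|a_2|^2+|a_5|^2+|a_{10}|^2=2\mathfrak{Re}(a_3\overline{a}_{12})$ for the condition $\del\delbar\ast\eta^{13\overline{1}\overline{3}}=0$) and hence represents a nonzero class, then invoke Corollary \ref{cor:main}. This is exactly the paper's argument, so the proposal is correct.
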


\subsection{Example 2}
Let us consider the group $G:=\mathbb{H}(3;\R)\times\mathbb{H}(3;\R)\times\R^2$, where $\mathbb{H}(3;\R)$ is the $3$-dimensional real Heisenberg group. We fix a basis $\{e^1,\dots,e^8\}$ for $\mathfrak{g}^*$, the dual of the Lie algebra $\mathfrak{g}$ of $G$ such that
\begin{align*}
\begin{cases}
de^1=de^2=de^3=de^4=de^5=de^7=0,\\
de^6=-\frac{1}{2}e^{12},\quad de^8=-\frac{1}{2}e^{34}.
\end{cases}
\end{align*}
Due to \cite[Theorem 7]{M62}, there exists a lattice $\Gamma$ of $G$ such that the quotient $M=\Gamma/G$ is a compact manifold. In particular, $M$ is real $8$-dimensional nilmanifold.

If we make use of the standard real coordinates $\{x_1,x_2,x_3\}$ and $\{x_4,x_5,x_6\}$ on the two copies of $\mathbb{H}(3;\R)$ and $\{x_7,x_8\}$ on $\R^2$, the coframe $\{e^1,\dots,e^8\}$ can be written as
\begin{align*}
\begin{cases}
e^1=dx^1,\quad  e^2=dx^2,\quad e^6=dx^3-x^1dx^2,\\
e^3=dx^4,\quad e^4=dx^5, \quad e^8=dx^6-x^4dx^5,\\
e^5=dx^7, \quad e^7=dx^8.
\end{cases}
\end{align*}
Notice that it defines a global left-invariant coframe of differential $1$-forms on $G$, and therefore on $M$.

Let us define an almost-complex structure $J$ on $\mathfrak{g}^*$ by setting the following basis for $(\mathfrak{g}^*)^{1,0}$
\begin{align*}
\begin{cases}
\eta^1:=e^1+ie^2,\quad \eta^2:=e^3+ie^4,\\
\eta^3:=e^5+ie^6, \quad 
\eta^4:=e^7+ie^8.
\end{cases}
\end{align*}
Let $Z_j$ be the dual of $\eta^j$ in $\mathfrak{g}$, for $j\in\{1,2,3,4\}$.
This position gives rise to a left-invariant integrable almost-complex structure on $G$, hence it descends to the quotient $M$. With an abuse of notation we will denote the latter by $J$.

We find  that the holomorphic coordinates on $M$ which induce $J$ are
\begin{align}\label{hol-coord}
\begin{cases}
z^1=x^1+ix^2,\\
z^2=x^4+ix^5,\\
z^3=x^7+\frac{1}{2}(x^2)^2+i(x^3-x^1x^2),\\
z^4=x^8+\frac{1}{2}(x^5)^2+i(x^6-x^4x^5).
\end{cases}
\end{align}

We point out that the structure equations for $(M,J)$ are
\begin{align}\label{eq:RT2_struct_eq}
\begin{cases}
d\eta^1=d\eta^2=0,\\
d\eta^3=\frac{1}{2}\eta^{1\overline{1}},\\
d\eta^4=\frac{1}{2}\eta^{2\overline{2}}.
\end{cases}
\end{align}

Let us now consider a generic Hermitian invariant metric $g$ with associated fundamental form
\begin{equation*}
\omega=\frac{i}{2}\sum_{j=1}^4\alpha_{j\overline{j}}\,\,\eta^{j\overline{j}}+\frac{1}{2}\sum_{j<k}\left(\alpha_{j\overline{k}}\,\,\eta^{j\overline{k}}-\overline{\alpha}_{j\overline{k}}\,\,\eta^{k\overline{j}}\right),
\end{equation*}
whose coefficients $\alpha_{i\overline{j}}$ are such that the matrix representing $g$
\begin{equation*}
\begin{pmatrix}
\alpha_{1\overline{1}} & -i\alpha_{1\overline{2}} & -i\alpha_{1\overline{3}} & -i\alpha_{1\overline{4}}\\
i\overline{\alpha}_{1\overline{2}} & \alpha_{2\overline{2}} & -i\alpha_{2\overline{3}} & -i\alpha_{2\overline{4}}\\
i\overline{\alpha}_{1\overline{3}} & i\overline{\alpha}_{2\overline{3}} & \alpha_{3\overline{3}} & -i\alpha_{3\overline{4}}\\
i\overline{\alpha}_{1\overline{4}} & i\overline{\alpha}_{2\overline{4}} & i\overline{\alpha}_{3\overline{4}} & \alpha_{4\overline{4}}
\end{pmatrix}
\end{equation*}
is positive definite.

It is straightforward to check with the aid of (\ref{eq:RT2_struct_eq}), that $g$ is a SKT metric if and only if
\begin{equation*}
\mathfrak{Im}(\alpha_{3\overline{4}})=0.
\end{equation*} 

We construct a $(0,1)$-vector form
\begin{align*}
\phi(\mathbf{t})&=t_{11}\overline{\eta}^{1}\otimes Z_{1}+t_{22}\overline{\eta}^{2}\otimes Z_{2}+t_{32}\overline{\eta}^{2}\otimes Z_{3}+t_{33}\overline{\eta}^{3}\otimes Z_{3}\\
&+t_{34}\overline{\eta}^{4}\otimes Z_{3}+t_{41}\overline{\eta}^{1}\otimes Z_{4}+t_{43}\overline{\eta}^{3}\otimes Z_{4}+t_{44}\overline{\eta}^{4}\otimes Z_{4},
\end{align*}
for $\mathbf{t}=(t_{11},t_{22},t_{32},t_{33},t_{34},t_{41},t_{43},t_{44})$ in sufficiently small ball $B$ centered in $0\in\C^8$. Using the holomorphic coordinates (\ref{hol-coord}), it is a computation to show that $\phi$ satisfies Maurer-Cartan equation. As a side note, thanks to \cite[Theorem 1.1]{CFP06},  we point out $\phi(\mathbf{t})$ parametrizes a locally complete family of complex analytic deformations. We construct the segment $\gamma\colon(-\epsilon,\epsilon)\rightarrow B$, where
\[
t\mapsto\gamma(t)= t (a_{11},a_{22},a_{32},a_{33},a_{34},a_{41},a_{43},a_{44}),
\]
with $(a_{11},a_{22},a_{32},a_{33},a_{34},a_{41},a_{43},a_{44})
\in\C^8$. The corresponding curve of deformations is
\begin{align*}
t\mapsto \phi(t)&=t(a_{11}\overline{\eta}^{1}\otimes Z_{1}+a_{22}\overline{\eta}^{2}\otimes Z_{2}+a_{32}\overline{\eta}^{2}\otimes Z_{3}+a_{33}\overline{\eta}^{3}\otimes Z_{3}\\
&+a_{34}\overline{\eta}^{4}\otimes Z_{3}+a_{41}\overline{\eta}^{1}\otimes Z_{4}+a_{43}\overline{\eta}^{3}\otimes Z_{4}+a_{44}\overline{\eta}^{4}\otimes Z_{4})
\end{align*}
whose derivative in $t=0$ is clearly
\begin{align*}
\phi'(0)&=a_{11}\overline{\eta}^{1}\otimes Z_{1}+a_{22}\overline{\eta}^{2}\otimes Z_{2}+a_{32}\overline{\eta}^{2}\otimes Z_{3}+a_{33}\overline{\eta}^{3}\otimes Z_{3}\\
&+a_{34}\overline{\eta}^{4}\otimes Z_{3}+a_{41}\overline{\eta}^{1}\otimes Z_{4}+a_{43}\overline{\eta}^{3}\otimes Z_{4}+a_{44}\overline{\eta}^{4}\otimes Z_{4}.
\end{align*}
Via structure equations (\ref{eq:RT2_struct_eq}) and the expression of $\phi'(0)$, we obtain that
\begin{align}\label{eq:RT2_lfhside}
2i\mathfrak{Im}&((\del\circ i_{\phi'(0)}\circ\del)(\omega))=\\
&\frac{1}{8}(i \alpha_{3\overline{3}}(a_{34}+\overline{a}_{34})+i\alpha_{4\overline{4}}(a_{43}+\overline{a}_{43})+\alpha_{3\overline{4}}(a_{33}+\c{a}_{44})-\overline{\alpha}_{3\overline{4}}(a_{44}+\overline{a}_{33}))\,\,\eta^{12\overline{12}}.\nonumber
\end{align}
We observe that $\eta^{12\overline{12}}=\frac{1}{4}\del\delbar(\eta^{3\overline{4}})$, therefore the real $(2,2)$-form $\eta^{12\overline{12}}$ represents the vanishing class in $H_{BC}^{2,2}(M)$. Hence, Corollary \ref{cor:main} gives no obstruction.

Nonetheless, if we take any smooth curve of SKT Hermitian invariant metrics $\{\omega_t\}$ along $\phi(t)$ such that $\omega_0=\omega$, written as $\omega_t=e^{i_{\phi(t)}|i_{\overline{\phi(t)}}}(\omega(t))$ with 
\[
\omega(t)=\frac{i}{2}\sum_{j=1}^4\alpha_{j\overline{j}}(t)\,\,\eta^{j\overline{j}}+\frac{1}{2}\sum_{j<k}\left(\alpha_{j\overline{k}}(t)\,\,\eta^{j\overline{k}}-\overline{\alpha}_{j\overline{k}}(t)\,\,\eta^{k\overline{j}}\right),
\]
a straightforward computation yields
\[
\del\delbar \omega'(0)=\frac{1}{8}\mathfrak{Im}({\alpha'_{3\overline{4}}}(0))\eta^{12\overline{12}},
\]
therefore, by imposing equation (\ref{eq:main}) of Theorem \ref{thm:main}, we obtain the following result.
\begin{thm}
Let $(M,J,g,\omega)$ be the nilmanifold obtained as the compact quotient $\Gamma/G$ of the Lie group $G:=\mathbb{H}(3;\R)\times\mathbb{H}(3;\R)\times \R^2$ by a lattice $\Gamma$ of $G$, with complex structure $J$ defined through the invariant coframe of $(1,0)$-complex forms $\{\eta^1,\eta^2,\eta^3,\eta^4\}$ with structure equations
\begin{align*}
\begin{cases}
&d\eta^1=0,\quad d\eta^2=0,\\
&d\eta^3=\frac{1}{2}\eta^{1\overline{1}},\\ 
&d\eta^4=\frac{1}{2}\eta^{2\overline{2}}.
\end{cases}
\end{align*}
Let us consider the curve of deformations
\begin{align*}
t\mapsto \phi(t)&=t (a_{11}\overline{\eta}^{1}\otimes Z_{1}+a_{22}\overline{\eta}^{2}\otimes Z_{2}+a_{32}\overline{\eta}^{2}\otimes Z_{3}+a_{33}\overline{\eta}^{3}\otimes Z_{3}+\\
&+a_{34}\overline{\eta}^{4}\otimes Z_{3}+a_{41}\overline{\eta}^{1}\otimes Z_{4}+a_{43}\overline{\eta}^{3}\otimes Z_{4}+a_{44}\overline{\eta}^{4}\otimes Z_{4}),\quad t\in(-\epsilon,\epsilon)
\end{align*}
and any smooth curve of Hermitian invariant metrics $\{\omega_t\}_{t\in(-\epsilon,\epsilon)}$ along $\phi(t)$ such that $\omega_0=\omega$, with $\omega_t=e^{i_{\phi(t)}|i_{\overline{\phi(t)}}}(\omega(t))$, where
\[
\omega(t)=\frac{i}{2}\sum_{j=1}^4\alpha_{j\overline{j}}(t)\,\,\eta^{j\overline{j}}+\frac{1}{2}\sum_{j<k}\left(\alpha_{j\overline{k}}(t)\,\,\eta^{j\overline{k}}-\overline{\alpha}_{j\overline{k}}(t)\,\,\eta^{k\overline{j}}\right).
\]
Then a necessary condition for $\omega_t$ to be SKT for any $t\in(-\epsilon,\epsilon)$
is that
\[
i \alpha_{3\overline{3}}(a_{34}+\overline{a}_{34})+i\alpha_{4\overline{4}}(a_{43}+\overline{a}_{43})+\alpha_{3\overline{4}}(a_{33}+\c{a}_{44})-\overline{\alpha}_{3\overline{4}}(a_{44}+\overline{a}_{33})=\mathfrak{Im}({\alpha'_{3\overline{4}}}(0)).
\]
\end{thm}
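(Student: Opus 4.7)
The plan is to apply Theorem \ref{thm:main} directly: it yields
\[
2i\mathfrak{Im}(\del\circ i_{\phi'(0)}\circ\del)(\omega) = \del\delbar\omega'(0)
\]
as a necessary condition for $\{\omega_t\}$ to be a smooth curve of SKT metrics through $\omega$. The left-hand side has already been computed in (\ref{eq:RT2_lfhside}) as an explicit multiple of $\eta^{12\overline{12}}$ whose coefficient involves $\alpha_{3\overline{3}}, \alpha_{4\overline{4}}, \alpha_{3\overline{4}}$ together with the deformation parameters $a_{jk}$. The only remaining task is therefore to compute $\del\delbar\omega'(0)$ and to compare the coefficients of $\eta^{12\overline{12}}$ on the two sides.

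First I would differentiate the given expression of $\omega(t)$ in $t$ and evaluate at $t=0$, obtaining $\omega'(0)$ as a linear combination of the invariant $(1,1)$-forms $\eta^{j\overline{j}}$ with coefficients $a'_{j\overline{j}}(0)$, and of $\eta^{j\overline{k}}$, $\eta^{k\overline{j}}$ for $j<k$ with coefficients $\alpha'_{j\overline{k}}(0)$ and their conjugates. Then I would apply $\del\delbar$ term by term, exploiting the structure equations (\ref{eq:RT2_struct_eq}): since $\eta^1, \eta^2$ are $d$-closed and $d\eta^3, d\eta^4$ are of pure type $(1,1)$, one has $\del\eta^j=0$ for all $j$, while $\delbar\eta^3$ and $\delbar\eta^4$ are explicit multiples of $\eta^{1\overline{1}}$ and $\eta^{2\overline{2}}$ respectively. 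A short case-by-case check then shows that the diagonal contributions $a'_{j\overline{j}}(0)\eta^{j\overline{j}}$ and all off-diagonal contributions for $(j,k)\neq(3,4)$ are annihilated by $\del\delbar$, so only the piece coming from $\alpha'_{3\overline{4}}(0)\eta^{3\overline{4}}-\overline{\alpha}'_{3\overline{4}}(0)\eta^{4\overline{3}}$ survives, producing
\[
\del\delbar\omega'(0) = \tfrac{1}{8}\,\mathfrak{Im}(\alpha'_{3\overline{4}}(0))\,\eta^{12\overline{12}}.
\]

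Finally, since $\eta^{12\overline{12}}$ is a non-zero invariant real $(2,2)$-form, equating the coefficients of $\eta^{12\overline{12}}$ on both sides (and cancelling the common factor $\tfrac{1}{8}$) yields the asserted necessary condition. The one step requiring genuine care is the sign and factor bookkeeping in expanding $\del\delbar(\eta^{3\overline{4}})$ and $\del\delbar(\eta^{4\overline{3}})$ via the Leibniz rule and verifying that no other term of $\omega'(0)$ contributes a non-zero multiple of $\eta^{12\overline{12}}$; this is routine but must be done attentively. No further ingredients beyond Theorem \ref{thm:main} and the structure equations (\ref{eq:RT2_struct_eq}) are needed.
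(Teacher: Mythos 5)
Your proposal is correct and follows essentially the same route as the paper: apply Theorem \ref{thm:main}, reuse the already-computed left-hand side (\ref{eq:RT2_lfhside}), compute $\del\delbar\omega'(0)=\tfrac{1}{8}\mathfrak{Im}(\alpha'_{3\overline{4}}(0))\,\eta^{12\overline{12}}$ from the structure equations (only the $\eta^{3\overline{4}}$--$\eta^{4\overline{3}}$ piece survives), and equate coefficients of $\eta^{12\overline{12}}$.
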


Authors state no conflict of interest.

\end{document}